\def\be{\begin{equation}}
\def\ee{\end{equation}}
\def\bea{\begin{equation*}}
\def\eea{\end{equation*}}
\def\begs{\begin{split}}
\def\ends{\end{split}}
\newtheorem{thm}{Theorem}
\newtheorem{lma}[thm]{Lemma}
\theoremstyle{remark}
\newtheorem{preremark}[thm]{Remark}
\newtheorem{preex}[thm]{Example}
\begin{document}

\title{Coarsening in 2D slabs}
\date{\today}
\author{\Large Michael Damron \thanks{The research of M. D. is supported by NSF grants DMS-0901534 and DMS-1007626.}\\ \small Princeton University \\ \small Princeton, NJ, USA \and \Large Hana Kogan \thanks{The research of H. K. is supported by NSF grant OISE-0730136.}\\ \small Courant Institute, NYU \\ \small New York, NY, USA \and \Large Charles M. Newman \thanks{The research of C. M. N. is supported by NSF grants OISE-0730136, DMS-1007524 and DMS-1007626}\\ \small Courant Institute, NYU \\ \small New York, NY, USA \and \Large Vladas Sidoravicius \thanks{The research of V. S. is supported by CNPq grants PQ 308787/2011-0 and 484801/2011-2.} \\ \small IMPA \\ \small Rio de Janeiro, RJ, Brazil}
\maketitle

\begin{abstract}
We study coarsening; that is, the zero-temperature limit of Glauber dynamics in the standard Ising model on slabs $S_k = \mathbb{Z}^2 \times \{0, \ldots, k-1\}$ of all thicknesses $k \geq 2$ (with free and periodic boundary conditions in the third coordinate). We show that with free boundary conditions, for $k\geq 3$, some sites fixate for large times and some do not, whereas for $k=2$, all sites fixate. With periodic boundary conditions, for $k \geq 4$, some sites fixate and others do not, while for $k=2$ and $3$, all sites fixate. 
\end{abstract}

\section{Introduction}

Coarsening models have been extensively studied in the Physics literature -- see, for example, \cite[Ch. 9]{KRB10} and the references therein. These are stochastic Ising models at some low temperature $T_1$ whose initial state is chosen from the equilibrium distribution at a higher temperature $T_2$. The special case of the coarsening model we consider here is the case where $T_1 = 0$ and $T_2 = \infty$. The states are assignments of $\pm 1$ to the vertices of some graph and the most commonly studied graph is $\mathbb{Z}^d$ (with nearest neighbor edges) or finite box approximations to $\mathbb{Z}^d$ (with, for example, free or periodic boundary conditions). 

For $d=1$, the dynamics is exactly that of the standard voter model and it is an old result \cite{A83} that almost surely every site flips (between $+1$ and $-1$) infinitely often. For $d=2$, it was shown in \cite{NNS00}, that still every site flips infinitely often, but it is an open problem to determine what happens for $d \geq 3$. In \cite{NS00}, it was proposed, based on numerical results in the related issue of ``persistence" (sites which do not flip for a long time) from \cite{S94} that the flipping results for $d=1,2$ might change by dimension $4$ or $5$. But in fact, the situation is unclear even in dimension $3$. In this paper, in a first attempt to shed some light on the possible difference between $\mathbb{Z}^2$ and $\mathbb{Z}^3$ we study coarsening in slabs of varying thickness $k$ so as to interpolate between the full two and three dimensional lattices. To our surprise, there is more interesting structure in this $k$-dependence than we originally suspected.

\subsection{The model and definitions}
The slab $S_k, ~k \geq 2$, is the graph with vertex set $\mathbb{Z}^2 \times \{0,1, \ldots, k-1\}$ and edge set $\mathcal{E}_k = \{\{x,y\} : \|x-y\|_1 = 1\}$. As is usual, we take an initial spin configuration $\sigma(0) = (\sigma_x(0))_{x \in S_k}$ on $\Omega_k = \{-1,1\}^{S_k}$ distributed using the product measure of $\mu_p,~ p \in [0,1]$, where
\[
\mu_p(\sigma_x(0)=+1) = p = 1- \mu_p(\sigma_x(0)=-1)\ .
\]

The configuration $\sigma(t)$ evolves as $t$ increases according to the zero-temperature limit of Glauber dynamics (the majority rule). To describe this, define the energy (or local cost function) of a site $x$ at time $t$ as 
\[
e_x(t) = - \sum_{y : \{x,y\} \in \mathcal{E}_k} \sigma_x(t)\sigma_y(t)\ .
\]
Note that up to a linear transformation, this is just the number of neighbors $y$ of $x$ such that $\sigma_y(t) \neq \sigma_x(t)$. Each site has an exponential clock with different clocks independent of each other and when a site's clock rings, it makes an update according to the rules
\[
\sigma_x(t) = \begin{cases}
- \sigma_x(t^-) & \text{ if } e_x(t^-) > 0 \\
\pm 1 & \text{ with probability }1/2 \text{ if } e_x(t^-) = 0 \\
\sigma_x(t^-) & \text{ if } e_x(t^-) < 0
\end{cases}\ .
\]
Write $\mathbb{P}_p$ for the joint distribution of $(\sigma(0),\omega)$, the initial spins and the dynamics realizations.

The main questions we will address involve fixation. We say that the slab $S_k$ fixates for some value of $p$ if 
\[
\mathbb{P}_p(\text{there exists }T=T(\sigma(0),\omega) <\infty \text{ such that } \sigma_0(t) = \sigma_0(T) \text{ for all } t \geq T) = 1\ .
\]
We will actually only focus on the case $p=1/2$, so write $\mathbb{P}$ for $\mathbb{P}_{1/2}$. The setup thus far corresponds to the model with free boundary conditions; in the case of periodic boundary conditions, we consider sites of the form $(x,y,k-1)$ and $(x,y,0)$ to be neighbors in $S_k$. If $k=2$ then this enforces two edges between $(x,y,1)$ and $(x,y,0)$, so that in the computation of energy of a site, that neighbor counts twice.

\section{Main results}
The first theorem concerns fixation for small $k$. We will prove the case $k=2$ in the next section; the case $k=3$ will be treated in a companion paper \cite{DKNS13}. That paper will also contain a simplified proof of the case $k=2$, notable for removing the bootstrap percolation comparison used here. 
\begin{thm}\label{thm: k=2}
For $k=2$ with free or periodic boundary conditions, $S_k$ fixates. For $k=3$ with periodic boundary conditions, $S_k$ fixates.
\end{thm}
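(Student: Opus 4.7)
The plan is to show that $\mathbb{P}$-almost surely the origin eventually lies in a monochromatic stable region of columns — a set $R\subset\Z^2$ of columns whose spins all share a common sign $\pm1$ and on which every spin has strictly negative energy, so that no spin in $R$ ever flips. Having such an $R$ through the origin gives fixation there. The argument compares the Glauber dynamics to a $2$-neighbor bootstrap percolation process on $\Z^2$.

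\emph{Stability criterion.} For $k=2$ with free boundary conditions each spin has exactly $5$ neighbors, so $e_x(t)=5-2k$ where $k$ is the number of agreeing neighbors; strict stability is $k\geq3$. In an aligned column the vertical neighbor already contributes $1$ to each spin, so if a $2\times2$ block of columns is initially aligned with a common sign then every spin in the block has three guaranteed agreeing neighbors (its vertical partner plus two in-block horizontal ones), giving $e\leq-1$ regardless of what happens outside the block. Hence each such $2\times2$ block is a \emph{permanent} stable region from time zero. More generally, any $R\subset\Z^2$ of columns aligned with one sign in which every column has at least two column-neighbors inside $R$ is permanently stable.

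\emph{Bootstrap cascade.} Initial seed blocks occur with positive $\Z^2$-density; more seeds are produced by short-time dynamics on well-separated boxes, again at positive density after a renormalization argument. Once columns are seeded, any column adjacent to at least two columns of an existing stable region of one sign is absorbed into that region provided its own spins are, or dynamically become, of that sign — a rule dominated by $2$-neighbor bootstrap percolation on $\Z^2$. Since $2$-neighbor bootstrap on $\Z^2$ fills the lattice from any positive initial density (Aizenman--Lebowitz, Schonmann), the origin is $\mathbb{P}$-a.s.\ eventually absorbed into a stable region, yielding fixation.

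The periodic-boundary cases are handled analogously. For $k=2$ periodic each vertical bond counts twice, strengthening the energy bound for aligned columns so the same proof applies with room to spare. For $k=3$ periodic each spin has $6$ neighbors ($4$ horizontal, $2$ vertical), and a fully aligned column supplies two agreeing vertical neighbors to each of its spins, giving the same ``at least $2$ of $4$ horizontal'' stability rule; fully aligned columns have initial probability $1/4$, which is ample seed density. The main obstacle is the coupling in the bootstrap step: producing new seeds via short-time stochastic evolution of the initial configuration, and reconciling the two competing $\pm$ bootstrap cascades along their mutual boundary so that boundary columns resolve consistently with one of the stable regions rather than oscillate indefinitely. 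I expect this to be handled by domination arguments that examine the local energy of disaligned boundary columns and show that with positive probability they resolve toward the adjacent stable sign within a bounded time window, feeding the bootstrap growth of that sign's stable region.
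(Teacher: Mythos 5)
Your overall strategy for the periodic $k=2$ case --- positive density of permanently stable $2\times 2$ blocks of aligned columns, followed by a comparison with two-neighbor bootstrap percolation on $\Z^2$ --- is indeed the paper's strategy (a version of van Enter/Straley's argument), and your stability criterion for aligned columns is correct. But there is a genuine gap exactly at the point you flag and defer: the reconciliation of the competing $+$ and $-$ cascades. Your bootstrap tracks ``monochromatic stable regions,'' and that notion cannot propagate across the interface between a $+$-region and a $-$-region: a column adjacent to two fixated $+$ columns and two fixated $-$ columns cannot be absorbed monochromatically into either, so the monotone domination by bootstrap percolation breaks down precisely where the filled regions meet, and ``I expect this to be handled by domination arguments'' is the entire difficulty, not a routine step. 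The paper's resolution is structurally different: the bootstrap variable records \emph{fixation of the projected column value} $\tau_y$, not membership in a monochromatic region, and the propagation step is a lemma stating that if two perpendicular neighbors $y-(1,0)$ and $y+(0,1)$ fixate --- \emph{possibly at opposite signs} --- then $y$ fixates. Proving that lemma requires two further inputs you do not have: (i) no column can fixate at a ``grey'' (disaligned) state, which the paper proves by an ergodicity argument combined with the symmetry exchanging the two layers; and (ii) the Nanda--Newman--Stein bound that each spin has only finitely many energy-lowering flips, which rules out a column sitting at high energy infinitely often and reduces the mixed-sign interface to a finite case analysis in which a positive-probability finite flip sequence creates a new monochromatic (hence permanently stable) $2\times 2$ patch. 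Without these, the oscillation you worry about is not excluded.

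Two smaller points. For $k=2$ with free boundary conditions you do not need any of this machinery: each vertex has odd degree $5$, so every flip changes the energy by at least $2$ in absolute value and the Nanda--Newman--Stein mass-transport identity $\mathbb{E}[e_v(t)-e_v(0)]=2\,\mathbb{E}\,m_t(v,v)$ immediately bounds the expected number of flips; the bootstrap argument is only needed when the degree is even. And your claim that $k=3$ periodic is ``handled analogously'' is asserted rather than proved; while the energy counting for aligned columns does match the $k=2$ periodic case, the analogues of the no-grey-fixation lemma and of the interface case analysis involve many more disaligned column states, and the authors themselves defer that case to a companion paper.
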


The proof of the following theorem is in Section~\ref{sec: blinkers}. The construction used in the proof for $k=4$ with periodic boundary conditions is considerably more involved and will not be given in this paper; it will appear in \cite{DKNS13}.
\begin{thm}\label{thm: kabove2}
With $k \geq 4$ and periodic boundary conditions, $S_k$ does not fixate. With $k\geq 3$ and free boundary conditions, $S_k$ does not fixate. 
\end{thm}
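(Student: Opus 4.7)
The strategy is to construct explicit \emph{blinkers}: finite initial patterns on finite subsets of $S_k$ whose presence in $\sigma(0)$ forces some designated site to change sign infinitely often. Combined with the i.i.d.\ Bernoulli$(1/2)$ structure of the initial measure and translation invariance of $\mathbb{P}$, these will yield non-fixation.

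The first and central step is to exhibit, for each case in the theorem (except $k=4$ with periodic boundary conditions, which is deferred to \cite{DKNS13}), a finite region $F\subset S_k$, a spin pattern $\beta$ on $F$, and a buffer pattern $\gamma$ on an enveloping shell $\partial F$ such that: (a) if $\sigma(0)$ agrees with $(\beta,\gamma)$ on $F\cup\partial F$, then some designated $x_0\in F$ has $\sigma_{x_0}(t)$ not eventually constant, $\mathbb{P}$-a.s.; and (b) the buffer is self-sustaining, meaning every site of $\partial F$ has strictly negative energy for every reachable configuration on $F$ and every extension of $\sigma(0)$ on $(F\cup\partial F)^c$. The cleanest mechanism to target is a \emph{tied-site blinker}: a single degree-$6$ site $x_0$ whose six neighbors split exactly $3$ to $3$ in sign and are frozen by the buffer. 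Then $x_0$ has energy $0$ at every time, each clock ring at $x_0$ flips it independently with probability $1/2$, and Borel--Cantelli gives infinitely many sign changes. For $k\geq 3$ with free boundary conditions, $x_0$ may be any site in an interior layer $1\leq z\leq k-2$; for $k\geq 5$ with periodic boundary conditions every site has degree $6$ and $x_0$ can be placed anywhere. Taking $\partial F$ two or three layers thick and arranging $\gamma$ by a matching argument at the second-neighbor layer (where sites of the form $x_0+\ebf_i+\ebf_j$ sit in the buffers of both first-neighbors $x_0+\ebf_i$ and $x_0+\ebf_j$, so opposite-sign first-neighbors impose conflicting demands) keeps every first-neighbor of $x_0$ at energy $\leq -2$; deeper buffer sites agree fully with the sign of whichever first-neighbor they surround and are immediately stable.

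Once the blinker is in place the conclusion is standard. The pattern $(\beta,\gamma)$ on the finite set $F\cup\partial F$ has positive $\mu_{1/2}$-probability; by horizontal translation invariance and Borel--Cantelli, infinitely many disjoint translates of the blinker occur in $\sigma(0)$ almost surely, so almost surely there exist sites that do not fixate. A finite surgery of the initial configuration near $\sigma_0$ together with ergodicity of $\mathbb{P}$ under $\mathbb{Z}^2$-shifts upgrades this to $\mathbb{P}(\sigma_0(t)\text{ not eventually constant})>0$, which is exactly non-fixation of $S_k$ as defined earlier.

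The main obstacle is clearly Step~1: identifying a finite combinatorial mechanism that is genuinely non-stabilizing under $0$-temperature majority dynamics. Most natural finite patterns settle to a stable configuration after a bounded number of flips, so it takes care to isolate a persistent non-trivial behaviour. The degree-$6$ tied-site blinker is the simplest mechanism available, and the requirement of a degree-$6$ site with room for a two-layer buffer around it is exactly what forces $k\geq 3$ in the free case and $k\geq 5$ in the periodic case; the borderline periodic case $k=4$ requires a more elaborate construction, deferred to \cite{DKNS13}.
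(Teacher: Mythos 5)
Your overall architecture --- a finite frozen pattern that forces a designated site to flip forever, followed by positive probability of the pattern plus translation invariance/ergodicity --- is exactly the paper's strategy, and your closing step is fine. The genuine gap is in the step you yourself flag as the main obstacle: the construction of the self-sustaining buffer. The claim that ``deeper buffer sites agree fully with the sign of whichever first-neighbor they surround and are immediately stable'' is false. For a monochromatic set to be frozen regardless of the (evolving, uncontrolled) spins outside it, \emph{every} site of the set must have a strict majority of its neighbors inside the set; a degree-$6$ site with a $3$--$3$ split is flippable. This condition fails for any buffer of bounded thickness around an interior site: taking the site of the frozen set that is maximal in the third coordinate and then lexicographically maximal in the first two shows it has at most $3$ of its $6$ neighbors inside, unless it lies on a degree-$5$ boundary level of the free slab. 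Hence in the free case any finite stable set must reach both the top and bottom levels, and in the periodic case it must wrap all the way around the third direction --- which in particular sinks your claim that for $k\geq 5$ periodic ``$x_0$ can be placed anywhere'' with a local two- or three-layer buffer. Even where the buffer can reach the boundary levels, a naive thickened rectangular shell is not stable at its corners (a corner site of a flat monochromatic plate on the top level has only $2$ of its $5$ neighbors in the plate).

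This is precisely what the paper's ``table'' construction is engineered to handle: large monochromatic plates on the degree-$5$ top and bottom levels (where interior and edge sites of a plate do have a strict majority of neighbors inside), stabilized at their corners by $2\times2$ full-height legs; in the periodic case the structure additionally wraps around the slab, which is why $k\geq 5$ layers are needed there. The designated non-fixating sites then sit on level $1$ between an inverted table frozen at $+1$ and a table frozen at $-1$. Note also that the paper does not use a single tied site with a frozen $3$--$3$ split, but a pair of adjacent non-fixating sites, each with a $3$--$2$ split among its frozen neighbors, driven back and forth via Lemma~\ref{lma: stopping_time}; your $6$-cycle matching at the second shell is a nice observation, but it only pushes the stability requirement one shell outward and cannot terminate it. To repair the proof you would need to replace the local buffer with boundary-anchored (or wrap-around) structures of the paper's type.
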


\section{Proof of Theorem~\ref{thm: k=2} for $k=2$}

For the free boundary condition case, the theorem follows from the argument in Nanda-Newman-Stein \cite{NNS00}. Specifically, for $v,v' \in S_2$, define $m_t(v',v)$ as the contribution to $e_v(t) - e_v(0)$ due to flips of the spin $\sigma_{v'}$. Write $\pi:S_2 \to \mathbb{Z}^2$ for the projection $\pi(x,y,z) = (x,y)$ and for $v \in S_2$, we use the notation that $\hat v$ is the vertex in $S_2$ with $\hat v \neq v$ but $\pi(\hat v) = \pi(v)$. Then
\[
\mathbb{E} \left[ e_v(t) - e_v(0) \right] = \sum_{v'\in S_2 : \|v-v'\|_1 \leq 1} \mathbb{E} m_t(v',v) = \mathbb{E} m_t(v,v) + \mathbb{E} m_t(\hat v, v)+ \sum_{\stackrel{v' \in S_2 : \|v-v'\|_1=1}{v' \neq \hat v}} \mathbb{E} m_t(v',v)\ .
\]
By symmetry, $\mathbb{E} m_t(v', v) = \mathbb{E} m_t(v,v')$ for all $v'$ so this equals
\[
\mathbb{E} m_t(v,v) + \mathbb{E} m_t(v,\hat v) + \sum_{\stackrel{v' \in S_2 : \|v-v'\|_1=1}{v' \neq \hat v}} \mathbb{E} m_t(v,v')\ .
\]
Note that whenever $v$ flips, the sum of the changes of $e_{v'}(t)-e_{v'}(0)$ for all neighbors $v'$ is simply equal to the change of $e_v(t)-e_v(0)$. Therefore
\[
\mathbb{E} \left[ e_v(t) - e_v(0) \right] = 2\mathbb{E} m_t(v,v)\ .
\] 
Because $v$ has 5 neighbors, $e_v(t)$ decreases by at least 2 each time $\sigma_v$ flips, so $\mathbb{E} m_t(v,v)$ is bounded above by $-2\mathbb{E} N_t(v)$, where $N_t(v)$ is the number of flips of $\sigma_v$ until time $t$. Taking $t$ to infinity and noting that $|e_v(t) - e_v(0)| \leq 10$ for all $t$, we see that almost surely, $\sigma_v$ flips finitely often.

For the periodic case, we will use the following fact several times. If $A \subset \Omega_2$ then we say that $\sigma(t) \in A$ \emph{infinitely often} if the set $\{t: \sigma(t) \in A\}$ is unbounded. To avoid technical issues, we will restrict our attention to $A$'s that are cylinder sets.
\begin{lma}\label{lma: stopping_time}
If $A$ and $B$ are (cylinder) events in $\Omega_2$ such that 
\[
\inf_{\sigma \in A} \mathbb{P}(\sigma(t) \in B \text{ for some } t \in (0,1] \mid \sigma(0)=\sigma) > 0\ ,
\]
then 
\[
\mathbb{P}(\sigma(t) \in A \text{ infinitely often but } \sigma(t) \in B \text{ finitely often})=0\ .
\]
\end{lma}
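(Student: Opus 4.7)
The plan is to run a conditional Borel--Cantelli argument along a sequence of stopping times at which the process enters $A$, using the strong Markov property of the dynamics.

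First I would let $\delta > 0$ denote the infimum appearing in the hypothesis, and define stopping times inductively by $\tau_0 = 0$ and $\tau_{n+1} = \inf\{t \geq \tau_n + 1 : \sigma(t) \in A\}$ (with $\tau_{n+1} = \infty$ if the set is empty). Since $A$ is a cylinder set and the spin configuration is a pure jump process, each $\tau_n$ is a stopping time with respect to the natural filtration $\mathcal{F}_t$ generated by the Poisson clocks, tie-breaking coins, and initial configuration. On the event $F := \{\sigma(t) \in A \text{ infinitely often}\}$ I claim that $\tau_n < \infty$ for every $n$: indeed, if some $\tau_n = \infty$, then $\sigma(t) \notin A$ for all $t \geq \tau_{n-1} + 1$, so $A$ is visited only finitely often. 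Thus $F \subseteq \{\tau_n < \infty \text{ for all } n\}$ up to a null set.

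Next I would introduce the events $E_n = \{\sigma(t) \in B \text{ for some } t \in (\tau_n, \tau_n + 1]\}$ for $n \geq 1$. Applied at the stopping time $\tau_n$, the strong Markov property of the dynamics (which holds because the joint process of configuration and Poisson clocks is a time-homogeneous Markov process) gives, on $\{\tau_n < \infty\}$,
\[
\mathbb{P}(E_n \mid \mathcal{F}_{\tau_n}) = \mathbb{P}\bigl(\sigma(s) \in B \text{ for some } s \in (0,1] \,\bigm|\, \sigma(0) = \sigma(\tau_n)\bigr) \geq \delta,
\]
since $\sigma(\tau_n) \in A$ by definition of $\tau_n$. (Measurability of the function $\sigma \mapsto \mathbb{P}(\sigma(\cdot) \in B \text{ for some } t \in (0,1] \mid \sigma(0) = \sigma)$ is standard for cylinder $B$.)

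Now the events $E_n$ are disjointly supported in time (each lies in $\mathcal{F}_{\tau_n + 1} \subseteq \mathcal{F}_{\tau_{n+1}}$ because $\tau_{n+1} \geq \tau_n + 1$), so I can apply the conditional Borel--Cantelli lemma (L\'evy's form) to the filtration $\mathcal{G}_n := \mathcal{F}_{\tau_n}$. On $F$, the series $\sum_n \mathbb{P}(E_n \mid \mathcal{G}_n) \geq \sum_n \delta$ diverges almost surely, and hence $\sum_n \mathbf{1}_{E_n} = \infty$ almost surely on $F$. Since each $E_n$ forces $\sigma(t) \in B$ somewhere in the interval $(\tau_n, \tau_n + 1]$, and these intervals are pairwise disjoint, $B$ is visited infinitely often almost surely on $F$, which is exactly the desired conclusion.

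The main technical obstacle I expect is the careful invocation of the strong Markov property for this continuous-time particle system: one must ensure that the Poisson clocks at time $\tau_n$ restart independently of $\mathcal{F}_{\tau_n}$, so that the conditional law of $(\sigma(\tau_n + s))_{s \geq 0}$ given $\mathcal{F}_{\tau_n}$ is the law of the process started from $\sigma(\tau_n)$. The restriction to cylinder $A$ and $B$ sidesteps measurability issues, and the one-unit gap built into the definition of $\tau_{n+1}$ ensures the events $E_n$ are well-separated so that the conditional Borel--Cantelli step applies cleanly.
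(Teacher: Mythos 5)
Your proposal is correct and follows essentially the same route as the paper, whose proof is just a two-line sketch: apply the strong Markov property at the stopping times $\mathcal{T}_0=0$, $\mathcal{T}_k = \inf\{t \geq \mathcal{T}_{k-1}+2 : \sigma(t)\in A\}$. You fill in the details (conditional Borel--Cantelli, measurability, right-continuity giving $\sigma(\tau_n)\in A$) that the paper leaves implicit; the only cosmetic difference is your gap of $1$ versus the paper's gap of $2$, both of which keep the intervals $(\tau_n,\tau_n+1]$ disjoint.
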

\begin{proof}
The proof is just an application of the strong Markov property at a sequence of stopping times $(\mathcal{T}_k)$, which could be given by $\mathcal{T}_0=0$ and
\[
\mathcal{T}_k = \inf \{t \geq \mathcal{T}_{k-1}+2 : \sigma(t) \in A\}\ .
\]
\end{proof}

Let us say that a site $v \in S_2$ \emph{fixates} for the realization $(\sigma(0),\omega)$ if there exists $T_v = T_v(\sigma(0),\omega) < \infty$ such that $\sigma_v(t) = \sigma_v(T_v)$ for all $t \geq T_v$. We say that $v$ \emph{fixates from time $T$} if for all $t \geq T$, $\sigma_v(t) = \sigma_v(T)$.

We now define a process $\tau(t) = (\tau_y(t): y \in \mathbb{Z}^2$) from $\sigma(t)$ by declaring $\tau_{\pi(v)}(t)=\sigma_v(t)$ if $\sigma_v(t)=\sigma_{\hat v}(t)$. Otherwise we declare $\tau_{\pi(v)}(t)$ to be grey. In the latter case, we refer to $\pi(v)$ as a $(+/-)$ or $(-/+)$ site if the site of $v,\hat v$ with third coordinate 1 is $+1$ or $-1$, respectively. We will use the terms `flip' and `fixate' for the configuration $\tau(t)$ as well. Note that with probability one, a site cannot flip from grey to grey; that is, it cannot flip from $(+/-)$ to $(-/+)$ or $(-/+)$ to $(+/-)$. We may say that $\pi(v)$ fixates at $+$; this means that $\pi(v)$ fixates and that its terminal value is $+$. We define $\pi(v)$ fixating at $-$ or at grey (either $(+/-)$ or $(-/+)$) similarly.

\begin{lma}\label{lma: grey}
With probability one, no site in $\mathbb{Z}^2$ can fixate at grey. 
\end{lma}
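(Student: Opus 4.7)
The plan is to show that fixation of any single site $\pi(v)$ at grey --- say at $(+/-)$ --- propagates outward and would force every site of $\mathbb{Z}^2$ to fixate at $(+/-)$; then ergodicity combined with spin-flip symmetry rules this out.

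The key rigidity claim is that on the event $\{\pi(v)\text{ fixates at }(+/-)\}$, for all sufficiently large $t$ all four horizontal neighbors of $v$ in the top layer carry spin $+1$ and all four horizontal neighbors of $\hat v$ in the bottom layer carry spin $-1$. To establish it I would apply Lemma~\ref{lma: stopping_time} with $A = \{\sigma_v = +1,\ \sigma_{\hat v} = -1,\ e_v \geq 0\}$ --- a six-spin cylinder event, equivalent (because the periodic vertical edge contributes $+2$ to $e_v$) to $\sigma_v = +1$, $\sigma_{\hat v} = -1$ and at most three of $v$'s horizontal neighbors in the top layer being $+1$ --- and $B = \{\sigma_v = -1\}$. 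From any $\sigma \in A$, with probability at least $(1-e^{-6})/12$ the clock of $v$ rings first among the six clocks attached to $\{v, \hat v\}$ and $v$'s four horizontal neighbors, it rings within time $1$, and the update (deterministic if $e_v > 0$, the fair coin if $e_v = 0$) sends $\sigma_v$ to $-1$; the clocks and coins at all other sites are independent of this scenario and do not alter these six spins in the interim, so the lower bound is uniform in $\sigma \in A$. Since fixation at $(+/-)$ makes $B$ occur only finitely often, Lemma~\ref{lma: stopping_time} forces $A$ to occur only finitely often on fixation. Combined with the fact that $\pi(v)$ is eventually always $(+/-)$, this yields $e_v < 0$ for all large $t$, i.e., all four horizontal neighbors of $v$ in the top layer are $+1$ forever. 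A symmetric argument applied to $\hat v$ handles the bottom layer.

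Consequently each site $\pi(w) \in \mathbb{Z}^2$ adjacent to $\pi(v)$ has $\sigma_w(t) = +1$ and $\sigma_{\hat w}(t) = -1$ for all sufficiently large $t$, so $\pi(w)$ itself fixates at $(+/-)$. Iterating and taking a countable intersection over expanding finite subsets of $\mathbb{Z}^2$, on the event $\{\pi(v)\text{ fixates at }(+/-)\}$ every site of $\mathbb{Z}^2$ fixates at $(+/-)$ almost surely. Let $F_\pm$ denote the event that every site of $\mathbb{Z}^2$ fixates at $(\pm/\mp)$. Each is translation-invariant, and since the joint law of $(\sigma(0),\omega)$ is a product measure ergodic under the $\mathbb{Z}^2$-shift action, $\mathbb{P}(F_\pm) \in \{0,1\}$. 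Global spin-flip symmetry gives $\mathbb{P}(F_+) = \mathbb{P}(F_-)$, and $F_+ \cap F_- = \emptyset$ forces both to vanish. Therefore $\mathbb{P}(\pi(v)\text{ fixates at grey}) = 0$ for every $v$, and a countable union over $v \in \mathbb{Z}^2$ completes the proof.

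The delicate point will be the rigidity step: verifying that the lower bound supplied to Lemma~\ref{lma: stopping_time} depends only on the six-site cylinder restriction of $\sigma$ (so that independence from the surrounding environment is genuine), and confirming that the conclusion ``$A$ occurs only finitely often'' together with ``$\pi(v) = (+/-)$ eventually'' really gives ``$e_v < 0$ for all large $t$'' rather than merely at clock-ring times, which is where right-continuity of the spin process enters. The subsequent propagation and ergodicity steps are essentially bookkeeping.
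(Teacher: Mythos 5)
Your proposal is correct and follows essentially the same route as the paper: apply Lemma~\ref{lma: stopping_time} to show that a site frozen at $(+/-)$ must eventually have all four top-layer neighbors at $+1$ and all four bottom-layer neighbors at $-1$ (your event $A=\{\sigma_v=+1,\sigma_{\hat v}=-1,e_v\geq 0\}$ is exactly the paper's ``at least three disagreeing neighbors, counting $\hat v$ twice''), propagate to conclude the whole top level fixates at $+1$ and the bottom at $-1$, and rule this out by spatial ergodicity plus a symmetry. The only cosmetic difference is that you invoke global spin-flip symmetry where the paper uses the top--bottom layer swap; both serve identically.
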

\begin{proof}
Let $v \in S_2$ and $A_v \subset \Omega_2$ be the event that $\sigma_v=+1$ but $\sigma_w=-1$ for at least 3 neighbors of $v$ (counting $\hat v$ twice). Let $B_v$ be the event but that $\sigma_v=-1$ but $\sigma_w=-1$ for at least 3 neighbors of $v$. There is some $c>0$ such that
\[
\mathbb{P}\left( \sigma_v(t) \in B_v \text{ for some } t \in (0,1] ~\bigg|~ \sigma(0) = \sigma \right) \geq c \text{ for all } \sigma \in A_v\ .
\]
For instance, $v$'s clock may ring before those of all its neighbors and $\sigma_v$ then flips. Using Lemma~\ref{lma: stopping_time},
\[
\mathbb{P}\left( \sigma(t) \in A_v \text{ infinitely often but } \sigma(t) \text{ flips only finitely often}\right) = 0\ .
\]

Suppose that for some $y \in \mathbb{Z}^2$ and $t\geq 0$, $\tau_y(t)$ is grey but $\tau_y(T)=\tau_y(t)$ for $T \geq t$. Write $v$ for the site in $S_2$ with $\pi(v)=y$ and third coordinate equal to 1. Assume without loss in generality that $\tau_y$ is ($+/-$). Note that then $v$ already has at least two unsatisfied neighbors (since we count $\hat v$ twice). Therefore off of the probability zero event above, all neighbors of $v$ with third coordinate equal to 1 must fixate in $\sigma(t)$ at $+1$. Similarly, all neighbors of $\hat v$ with third coordinate equal to 0 fixate in $\sigma(t)$ at $-1$. Iterating this argument, with positive probability, the top level of $S_2$ fixates in $\sigma(t)$ at $+1$ and the bottom fixates at $-1$. By ergodicity under spatial translations, this event would have probability 1 but this contradicts symmetry under permuting the top and bottom levels.
\end{proof}

\begin{lma}\label{lma: fixate}
With probability one, for all $y \in \mathbb{Z}^2$, if $y-(1,0)$ and $y+(0,1)$ fixate in $\tau(t)$, then so does $y$.
\end{lma}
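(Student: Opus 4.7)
My plan is to parallel the proof of Lemma~\ref{lma: grey} and argue by contradiction using Lemma~\ref{lma: stopping_time}. I fix $y \in \mathbb{Z}^2$, set $W = y - (1,0)$ and $N = y + (0,1)$, and for each $z \in \{y, W, N\}$ let $v_z = (z, 1)$ and $\hat v_z = (z, 0)$ denote the two sites of $S_2$ above $z$. By Lemma~\ref{lma: grey}, the fixation values of $\tau_W$ and $\tau_N$ lie almost surely in $\{+, -\}$, and I will focus on the case $\tau_W = \tau_N = +$; the case $\tau_W = \tau_N = -$ follows by spin-flip symmetry.

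Conditioning on $\tau_W = \tau_N = +$ with common fixation time $T$, I have $\sigma_{v_W} = \sigma_{\hat v_W} = \sigma_{v_N} = \sigma_{\hat v_N} = +1$ for every $t \geq T$. The key local computation is that $\sigma_{v_y} = \sigma_{\hat v_y} = +1$ is absorbing for $t \geq T$: at either of $v_y, \hat v_y$, the signed sum of neighbor spins (with the partner counted twice by the periodic boundary condition) is at least $2 + 1 + 1 - 1 - 1 = 2 > 0$, so with the site itself at $+1$ the flip energy is $\leq -2 < 0$ and no flip occurs. Hence once $\tau_y = +$ after time $T$, it is permanent, and it suffices to show that almost surely $\tau_y$ hits $+$ after time $T$ or fixates at $-$.

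To this end I apply Lemma~\ref{lma: stopping_time} with the cylinder events
\begin{equation*}
A = \{\sigma_{v_W} = \sigma_{\hat v_W} = \sigma_{v_N} = \sigma_{\hat v_N} = +1 \text{ and } \tau_y \text{ is grey}\},
\end{equation*}
\begin{equation*}
B = \{\sigma_{v_W} = \sigma_{\hat v_W} = \sigma_{v_N} = \sigma_{\hat v_N} = +1 \text{ and } \sigma_{v_y} = \sigma_{\hat v_y} = +1\}.
\end{equation*}
For $\sigma \in A$, WLOG $(\sigma_{v_y}, \sigma_{\hat v_y}) = (+1, -1)$, and I consider the event that the Poisson clock at $\hat v_y$ is the first of the six clocks at $\{v_y, \hat v_y, v_W, \hat v_W, v_N, \hat v_N\}$ to ring in $(0,1]$; this event has probability at least $(1 - e^{-6})/6$, a positive constant uniform in $\sigma \in A$. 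On it, the spins at $v_y$ and at the four $W,N$-sites retain their initial values at the ring time, so $\hat v_y$'s energy sum is $\geq 2(+1) + 1 + 1 - 1 - 1 = 2 > 0$ regardless of its east and south neighbors, forcing $\sigma_{\hat v_y}$ to flip to $+1$ and placing the configuration in $B$.

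Assembling the pieces: if $\tau_y$ did not fixate, then either it is grey infinitely often, in which case $A$ holds i.o.\ for $t \geq T$ and Lemma~\ref{lma: stopping_time} forces $B$ i.o., so the absorption of $(+1,+1)$ after $T$ yields $\tau_y$ fixation at $+$, a contradiction; or it is grey only finitely often, in which case $\tau_y$ is eventually constant in $\{+,-\}$ (transitions between $+$ and $-$ would require simultaneous rings of the clocks at $v_y$ and $\hat v_y$, a probability-zero event), again a contradiction. The main obstacle to completing the proof will be the remaining case $\tau_W \neq \tau_N$: there the contributions of $W$ and $N$ to the energy sums of $v_y, \hat v_y$ cancel, so neither $(+1,+1)$ nor $(-1,-1)$ is absorbing, and I expect a substantially different input—plausibly the bootstrap-percolation comparison alluded to at the start of Section~3—will be needed.
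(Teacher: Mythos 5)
Your treatment of the case in which $y-(1,0)$ and $y+(0,1)$ fixate at the same sign is correct and essentially matches the paper's argument for that case: the monochromatic column over $y$ becomes absorbing once the two fixed neighboring columns agree with it, and Lemma~\ref{lma: stopping_time} upgrades ``grey infinitely often'' to ``hits $+$ after the fixation time.'' The genuine gap is the case you defer at the end, $\tau_{y-(1,0)} \neq \tau_{y+(0,1)}$. This is the heart of the lemma, and your guess that it is handled by the bootstrap-percolation comparison is backwards: that comparison is the outer layer of the proof of Theorem~\ref{thm: k=2} and takes the present lemma as its input, so it cannot be invoked here.

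What the paper does in the mixed case (say $\tau_{y-(1,0)}$ fixed at $+$ and $\tau_{y+(0,1)}$ fixed at $-$) is the following. It first imports from \cite{NNS00} the fact that almost surely each spin makes only finitely many energy-lowering flips; combined with Lemma~\ref{lma: stopping_time}, this shows that no site can disagree with at least $4$ of its six neighbor-slots infinitely often. If $\tau_y$ were grey, say $(+/-)$, infinitely often, then at those times each of the two spins in the column over $y$ already disagrees with at least $3$ neighbor-slots (the partner counted twice plus one of the two fixed neighboring columns), so eventually the spins over $y+(1,0)$ and $y-(0,1)$ must agree with them, forcing $\tau_{y+(1,0)}=\tau_{y-(0,1)}=(+/-)$ at infinitely many of those times. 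One then extracts a recurrent value for $\tau_{y+(1,-1)}$ and checks case by case that a finite sequence of flips, each of uniformly positive probability, makes all eight spins over $\{y,\,y+(1,0),\,y-(0,1),\,y+(1,-1)\}$ the same sign --- an absorbing configuration --- after which Lemma~\ref{lma: stopping_time} yields the contradiction. Without an argument of this kind (in particular, without the finiteness of energy-lowering flips, which is where translation invariance enters), your proof does not establish the lemma.
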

\begin{proof}
Suppose that $y-(1,0)$ and $y+(0,1)$ fixate in $\tau(t)$. If they both fixate at $+$ then the argument is not difficult: if $y$ does not fixate it must be grey (say $(+/-)$) infinitely often. But in this case, we can apply Lemma~\ref{lma: stopping_time} to the event $A$ that $\sigma_{y-(1,0)}$ and $\sigma_{y+(0,1)}$ are $+1$ with $\tau_y$ equal to $(+/-)$ and $B$ the event that $\sigma_{y-(1,0)}$ and $\sigma_{y+(0,1)}$ are $+1$ with $\tau_y$ equal to $+$. This proves that $\tau_y$ cannot be grey infinitely often without being $+$ infinitely often. But because two neighbors of $y$ have $\tau$-value fixed at $+$, $\tau_y$ will then remain at $+$ after some time.

Otherwise $y-(1,0)$ and $y+(0,1)$ fixate at different $\tau$-values, say $+1$ and $-1$ respectively. We will use the following fact: with probability one, each spin $\sigma_v$ can have only finitely many energy-lowering flips. In other words, for each $v \in S_2$ and $t\geq 0$ we can define $F_v(t)$ to be the number of times $s \in (0,t)$ such that $\sigma_v(s^-) \neq \sigma_v(s^+)$ and $e_v(s^+)<e_v(s^-)$. Since the measure $\mathbb{P}$ is invariant under translations, the argument of Newman-Nanda-Stein \cite{NNS00} can be applied to find
\[
\lim_{t \to \infty} F_v(t) <\infty \text{ with probability one}\ .
\]
As a consequence of this and Lemma~\ref{lma: stopping_time}, we see that for each $v \in S_2$, 
\begin{equation}\label{eq: energy_lowering}
\mathbb{P}\left( \sigma_v(t) \text{ disagrees with at least } 4 \text{ neighbors of }v \text{ in }S_2 \text{ infinitely often} \right) = 0\ .
\end{equation}

Assume that the $\tau$-value at $y$ does not fixate; then it must be grey (for example $(+/-)$) infinitely often. Note that at each of these times, each $\sigma_v$ spin at a site $v$ with $\pi(v)=y$ disagrees with at least 3 neighbors. From the above remarks, there must be some random time at which these spins no longer disagree with at least 4 neighbors. This implies that infinitely often 
\[
\tau_{y-(1,0)}=+,~\tau_y=+/-,~\tau_{y+(0,1)}=- \text{ and } ~\tau_{y+(1,0)}= \tau_{y-(0,1)}=+/-\ .
\]

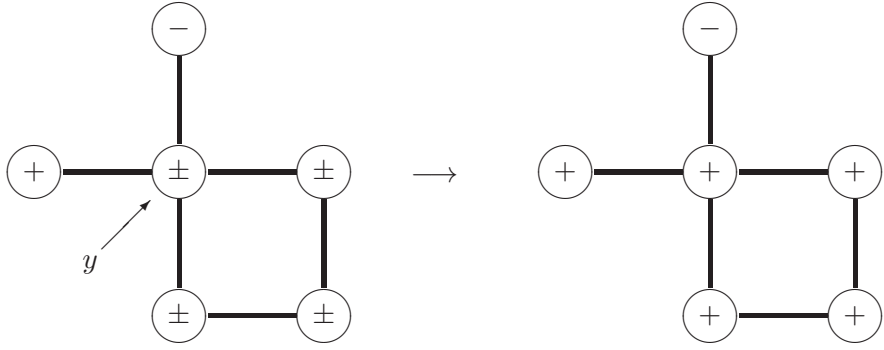
\begin{figure}
\setlength{\unitlength}{.5in}
\begin{picture}(10,3)(-5.5,2.5)
\put(-3.5,5){\circle{.5}}
\put(-2,5){\circle{.5}}
\put(-.5,5){\circle{.5}}
\put(-2,6.5){\circle{.5}}
\put(-2,3.5){\circle{.5}}
\put(-.5,3.5){\circle{.5}}
\put(-3.62,4.93){$+$}
\put(-2.12,4.93){$\pm$}
\put(-.62,4.93){$\pm$}
\put(-2.12,6.42){$-$}
\put(-2.12,3.43){$\pm$}
\put(-.62,3.43){$\pm$}
\linethickness{1.5pt}
\put(-1.7,3.5){\line(1,0){.91}}
\put(-1.7,5){\line(1,0){.91}}
\put(-3.2,5){\line(1,0){.91}}
\put(-2,3.8){\line(0,1){.91}}
\put(-2,5.3){\line(0,1){.91}}
\put(-.5,3.8){\line(0,1){.91}}
\put(-3,4){$y$}
\put(-2.8,4.2){\vector(1,1){.5}}
\put(.4,4.9){$\longrightarrow$}
\put(2,5){\circle{.5}}
\put(3.5,5){\circle{.5}}
\put(5,5){\circle{.5}}
\put(3.5,6.5){\circle{.5}}
\put(3.5,3.5){\circle{.5}}
\put(5,3.5){\circle{.5}}
\put(1.88,4.93){$+$}
\put(3.38,4.93){$+$}
\put(4.88,4.93){$+$}
\put(3.38,6.42){$-$}
\put(3.38,3.43){$+$}
\put(4.88,3.43){$+$}
\linethickness{1.5pt}
\put(3.8,3.5){\line(1,0){.91}}
\put(3.8,5){\line(1,0){.91}}
\put(2.3,5){\line(1,0){.91}}
\put(3.5,3.8){\line(0,1){.91}}
\put(3.5,5.3){\line(0,1){.91}}
\put(5,3.8){\line(0,1){.91}}
\end{picture}
\caption{The local configuration $\tau$ near $y$ on the left, in which $\tau_y = (+/-), \tau_{y-(1,0)} = +, \tau_{y+(0,1)} = -$ and $\tau_y=(+/-)$. The $\tau$-values at $y+(1,0)$ and $y-(0,1)$ are $(+/-)$. In the case depicted, we also have $\tau_{y+(1,-1)} = (+/-)$. One example of a finite sequence of flips that can occur is as follows. $\tau_y$ flips to $+$, $\tau_{y+(1,0)}$ flips to $+$, $\tau_{y-(0,1)}$ flips to $+$ and then $\tau_{y+(1,-1)}$ flips to $+$. This eventually fixates $\tau$-values as on the right.}
\label{fig: fig_1}
\end{figure}

We now consider the $\tau$-value of $y+(1,-1)$ at these times $\mathcal{T}$. There must exist one status from the choices $+,-,(+/-)$ and $(-/+)$ such that this spin has this status infinitely often (of the times $\mathcal{T}$). But now, it is elementary (though a bit tedious) to see that in each case, there is a finite sequence of flips that will lead all eight $\sigma_v$'s for $v \in S_2$ with $\pi(v)$ in the set $\{y,y+(1,0),y-(0,1),y+(1,-1)\}$ to have the same sign -- see Figure~\ref{fig: fig_1} for an example. Using Lemma~\ref{lma: stopping_time} completes the proof, because once they are the same sign, they can never flip again.
\end{proof}

To complete the proof we invoke a comparison to bootstrap percolation, giving a version of van Enter's argument \cite{VE87} initially due to Straley. For any $\sigma \in \Omega_2$ we identify a configuration $\eta = \eta(\omega) \in \{0,1\}^{\mathbb{Z}^2}$ as follows. We declare $\eta_x = 1$ if all $v$ in the $2\times 2 \times 2$ block $B_x = 2x+\{0,1\}^3$ have spins of the same sign in $\sigma$. Note that under the coarsening dynamics, all such sites are fixated in $S_2$. For all other sites $x$ we set $\eta_x=0$. We then run the following discrete time (deterministic) dynamics on $\eta$. We set $\eta(0) = \eta(\omega)$ with $\omega$ distributed by $\mathbb{P}_{1/2}$ and for each $n \in \mathbb{N}$ and $x \in \mathbb{Z}^2$, we set $\eta_x(n) = 1$ if either (a) $\eta_x(n-1)=1$ or (b) $\eta_{y_1}(n-1) = \eta_{y_2}(n-1) = 1$ for at least two neighbors $y_i$ of $x$ with $\|y_1-y_2\|_\infty = 1$. Otherwise we set $\eta_x(n)=0$. This is a modified bootstrap percolation dynamics.

We claim that with probability one, for each $x$, the value $\eta_x(n)$ is $1$ for all large $n$. Using Lemma~\ref{lma: fixate}, this will prove that all sites in $S_2$ fixate. To show the claim we briefly summarize the classic argument of \cite{VE87}. Because the $\eta_x(0)$ variables are independent from site to site, one can show that for some $n$, the probability is positive that all sites in the rectangle $[0,n]^2$ begin with $\eta$-value $1$ but that there is no rectangular contour enclosing $[0,n]^2$ all of whose sites begin with $\eta$-value $0$. On this event, under our dynamics, such a rectangle will eat away at all of space and fix all sites to have $\eta$-value $1$. However, by the ergodic theorem, with probability one, some translate of this event will occur and this completes the proof.

\section{Proof of Theorem~\ref{thm: kabove2}}\label{sec: blinkers}

We begin by proving the case $k=3$ with free boundary conditions. The idea is to force a large rectangle on level 3 (that is, with third coordinate equal to $2$) to be fixed at $+1$ with a parallel region on level 1 (third coordinate equal to $0$) fixed at $-1$. Spins on the middle level between these regions act like spins in the coarsening model on $\mathbb{Z}^2$.

\begin{figure}
\setlength{\unitlength}{.5in}
\begin{picture}(10,3)(-5.5,2.5)
\multiput(-1.5,3)(.75,0){7}{\line(0,1){3.75}}
\multiput(-1.5,3)(0,.75){6}{\line(1,0){4.5}}
\multiput(-1.25,3.3)(.75,0){6}{$+$}
\multiput(-1.25,4.05)(.75,0){6}{$+$}
\multiput(-1.25,6.3)(.75,0){6}{$-$}
\multiput(-1.25,5.55)(.75,0){6}{$-$}
\multiput(-1.25,4.8)(.75,0){2}{$+$}
\multiput(1.75,4.8)(.75,0){2}{$-$}
\multiput(1.75,5.55)(.75,0){2}{$-$}
\multiput(-1.13,3.37)(0,.75){5}{\circle{.5}}
\multiput(-.38,3.37)(0,.75){5}{\circle{.5}}
\multiput(1.87,3.37)(0,.75){5}{\circle{.5}}
\multiput(2.62,3.37)(0,.75){5}{\circle{.5}}
\end{picture}
\caption{Level 1 (in $\mathbb{Z}^2 \times \{1\}$) in the event $A$, for a slab of width 3 with free boundary conditions. The left unmarked box represents the vertex $(0,0,1)$. The vertices with circled spins are ones both of whose third coordinate neighbors (``above'' and ``below'') have the same spin. Any configuration in $A$ has the property that spins at vertices above those in the uncircled region are $-1$ and below those are $+1$. The unmarked spins flip infinitely often.}
\label{fig: fig_2}
\end{figure}

To stabilize levels 1 and 3, we define for $m, n \in \mathbb{Z}$ the set $P_{m,n} = \{m,m+1\}\times \{n,n+1\} \times \{0,1,2\}$ and the ``table'' of size $n\geq 2$
\[
T_n = \bigg[ \{-n, \ldots, n\}^2 \times \{2\} \bigg] \cup P_{-n,-n} \cup P_{-n,n-1} \cup P_{n-1,-n} \cup P_{n-1,n-1}\ .
\]
The inverted table of size $n$, $T_n'$, is the reflection of $T_n$ through $\mathbb{Z}^2 \times \{1\}$. Note that if either of these sets are initially monochromatic, then they will be fixed by the coarsening dynamics. Define the event $A \subset \Omega_3$ that
\begin{enumerate}
\item all sites in $P_1=P_{-2,-2}\cup P_{-2,-1} \cup P_{2,-2}$ have spin $+1$ and all sites in $P_2=P_{-2,1} \cup P_{2,0} \cup P_{2,1}$ have spin $-1$,
\item all sites in $\{0,1\}\times \{-2,-1\} \times \{1\}$ have spin $+1$ and all sites in $\{0,1\}\times \{1,2\}\times \{1\}$ have spin $-1$,
\item all sites in $T_{10}' \setminus \bigg[ P_1 \cup P_2 \bigg]$ have spin $+1$ and
\item all sites in $T_{20} \setminus \bigg[ T_{10}' \cup P_1 \cup P_2 \bigg]$ have spin $-1$.
\end{enumerate}
The reader may verify that all sites in $T_{10} \cup T_{20}' \cup P_1 \cup P_2$ are fixated in the event $A$. However, the vertex $(0,0,1)$ then has 3 plus neighbors so by Lemma~\ref{lma: stopping_time} it must have a plus spin infinitely often. This implies that the vertex $(1,0,1)$ has 3 plus neighbors infinitely often and therefore must have a plus spin infinitely often. By symmetry, the same is true for these vertices and minus spin, meaning they flip infinitely often. As usual, by spatial ergodicity, almost surely some translate of this event occurs and therefore with probability one, not all sites fixate.

The cases $k \geq 3$ with free boundary conditions are handled similarly. We simply add more layers of the construction on top of level 2. To define the event precisely, we set $A$ to be the event defined exactly as in the case of $k=3$ (above). This event only involves the first three levels $(0-2)$ of the slab. Define $A' = \{\sigma\}$ as the event that $\sigma \in A$ and that for all $(x,y,k)$ with $(x,y) \in \{-20, \ldots, 20\}^2$ and $k \geq 3$, we have $\sigma_{(x,y,k)} = \sigma_{(x,y,2)}$. Because in the slab $S_3$, the event $A$ forced all spins for vertices in $\{-20, \ldots, 20\}^2 \times \{2\}$ to be fixed, it is not hard to check that on $A'$, all spins for vertices in $\{-20, \ldots, 20\}^2 \times \{2, \ldots, k-1\}$ are also fixed. The same argument as before gives that the spins at $(0,0,1)$ and $(1,0,1)$ do not fixate and consequently the slab does not fixate.

For the case $k \geq 5$ with periodic boundary conditions, we consider again the event $A$ and add duplicate layers of the third level as before. The only difference is that we also need to add a duplicate layer of the zeroth level at the top (which is the same as level $-1$), in the set $\mathbb{Z}^2 \times \{k-1\}$. Because we need to duplicate both level 0 and 2, this requires at least 5 layers. The proof is now complete.

\medskip
\noindent
{\bf Acknowledgements.} M. D. thanks C. M. N. and the Courant Institute for support.

\thebibliography{1}

\bibitem{A83}
R. Arratia. (1983). Site recurrence for annihilating random walks on $\mathbb{Z}^d$. \emph{Ann. Probab.} {\bf 11} 706--713.

\bibitem{DKNS13}
M. Damron, H. Kogan, C. M. Newman and V. Sidoravicius. (2013). In preparation.

\bibitem{VE87}
A. C. D. van Enter. (1987). Proof of Straley's argument for bootstrap percolation. \emph{J. Statist. Phys.} {\bf 48} 943--945.

\bibitem{KRB10}
P. L. Krapivsky, S. Redner and E. Ben-Naim. A kinetic view of statistical physics. \emph{Cambridge University Press, Cambridge,} 2010. 504 pp. 

\bibitem{NNS00}
S. Nanda, C. M. Newman and D. L. Stein. Dynamics of Ising spin systems at zero temperature. In \emph{On Dobrushin's Way (from Probability Theory to Statistical Mechanics)}, R. Minlos, S. Shlosman and Y. Suhov, eds., Amer. Math. Soc. Transl. (2) {\bf 198}:183--193 (2000).

\bibitem{NS00}
C. M. Newman and D. L. Stein. (2000). Zero-temperature dynamics of Ising spin systems following a deep quench: results and open problems. \emph{Physica A}. {\bf 279} 159--168.

\bibitem{S94}
D. Stauffer. (1994). Ising spinodal decomposition at $T=0$ in one to five dimensions. \emph{J. Phys. A.} {\bf 27} 5029--5032.

\end{document}